\newtheorem{theorem}{Theorem}
\newtheorem{proposition}{Proposition}
\newtheorem{lemma}{Lemma}
\newtheorem{corollary}{Corollary}
\theoremstyle{definition}
\newtheorem{definition}{Definition}
\newtheorem{conjecture}{Conjecture}
\theoremstyle{remark}
\newtheorem{remark}{Remark}
\newcommand{\s}{\mathcal{S}}
\newcommand{\e}{\mathrm{e}^}
\newcommand{\BN}{\mathbb{N}}
\newcommand{\Q}{\mathbb{Q}}
\newcommand{\Z}{\mathbb{Z}}
\DeclareMathOperator{\col}{Col}
\DeclareMathOperator{\com}{\mathcal{C}}
\begin{document}



\title[]{The Potts model and chromatic functions of graphs}

\author{Martin Klazar$^*$}
\author{Martin Loebl$^*$}
\author{Iain Moffatt$^\dagger$}
\address{$^*$Department of Applied Mathematics, Charles University, Malostransk\'{e} nam. 25, 118 00 Praha 1, Czech Republic.}
\address{$^\dagger$ Department of Mathematics, Royal Holloway, University of London, Egham, Surrey, TW20 0EX, United Kingdom}
\email{klazar@kam.mff.cuni.cz; loebl@kam.mff.cuni.cz; iain.moffatt@rhul.ac.uk}

\thanks{Martin Klazar and Martin Loebl were partially supported by the
Czech Science Foundation GACR under the contract number P202-12-G061, CE-ITI}

\subjclass[2010]{Primary 05C31; Secondary 05A17, 82B20}
\keywords{graph polynomial; chromatic polynomial; $U$-polynomial;  graph colouring; integer partition; Potts model}
\date{\today}

\begin{abstract}  
The $U$-polynomial of Noble and Welsh  is known to have intimate connections with the Potts model as well as with several important graph polynomials.  For each graph $G$,  $U(G)$ is equivalent to  Stanley's symmetric bad colouring polynomial $XB(G)$. Moreover Sarmiento established the equivalence between $U$ and the polychromate of Brylawski.  Loebl defined the $q$-dichromate $B_q(G,x,y)$ as a function of a graph $G$ and three independent variables $q,x,y$, proved that it is equal to the partition function of the Potts model with variable number of states and with a certain external field contribution, and conjectured that the $q$-dichromate is equivalent to the $U$-polynomial. He also proposed a stronger conjecture on integer partitions. The aim of this paper is two-fold. We present a construction disproving  Loebl's integer partitions conjecture, and we introduce a new function  $B_{r,q}(G;x,k)$ which is also equal to the partition function of the Potts model with variable number of states and with a (different) external field contribution, and we show that
$B_{r,q}(G;x,k)$ is equivalent to the $U$-polynomial and to Stanley's symmetric bad colouring polynomial. 
\end{abstract}

\maketitle


\section{Introduction}
\label{sec.intro}

Here we are interested in two families of   generalisations of the chromatic polynomial of graphs. The first family contains  polynomials  associated with symmetric functions, and the second family arises from quantum knot theory. Both of these families are known to have intimate connections with the Potts model. 
This paper provides a step towards a full understanding of the connections between these two families of graph polynomials and their connections with the Potts model, and towards the development of a framework for the application of the statistical mechanical `toolbox' to address hard problems in the theory of these generalised chromatic polynomials. Such statistical mechanical approaches have proven to be very effective in graph theory, particularly for   the study of the zeros of the chromatic polynomial (see \cite{Sok99}, for example). 

Graphs here can have  multiple edges or loops. If $G=(V,E)$ is a graph, $G-e$ (respectively,  $G/e$)  denotes the graph obtained from $G$ by deleting (respectively,  contracting)   an edge $e\in E$. If $A \subseteq E$, then  $\com(A)$ denotes the set of connected components of the spanning subgraph $(V,A)$ of $G$,  $k(A)=|\com(A)|$, and  $|C|$ denotes the number of vertices in $C\in \com(A)$.
A {\em proper $k$-colouring} of a graph $G=(V,E)$ is a mapping $s:\;V\rightarrow \{0,\ldots, k-1\}$ with the property 
that $s(u)\neq s(v)$ for all $uv\in E$. We use $\col(G;k)$ to denote the set of proper $k$-colourings of $G$, and use $\col(G)=
\bigcup_{k=0}^{\infty}\col(G;k)$ to denote the set of all proper colourings of $G$.

\medskip

 We are  interested in knowing when one graph function 
$Q= (Q(G); G$ graph$)$  determines another graph function $P= (P(G); G$ graph$)$. 
We say that graph function $P$ {\em determines} graph function $Q$ if, for every $G$, any evaluation of $Q(G)$ can be obtained given an oracle which returns evaluations of $P(G)$.  We emphasise that the graph $G$ is not known when computing evaluations of $Q(G)$. Clearly, if $P(G)$ determines $G$ uniquely then it also determines $Q(G)$. 
We also note that in the case that $Q(G)$ is a polynomial for each $G$, then $P$ determines $Q$ if for each $G$ the evaluations of $P(G)$ determine the coefficients of $Q(G)$ expressed in some specified basis.
 We say that $P$ and $Q$  are {\em equivalent} if they determine each other.

\medskip

We  now give a brief overview of our first family of graph polynomials. 
Stanley's {\em symmetric function generalisation of the chromatic polynomial} (see \cite{S1}, \cite{S2}) is defined by 
$$
X(G;x_0,x_1,\dots):=\sum_{s\in\col(G)}\prod_{v\in V}x_{s(v)}.
$$
We are particularly interested in a generalisation of this, the {\em symmetric function generalisation of the bad colouring polynomial} of \cite{S1}, which is defined by
$$
XB(G;t,x_0,x_1,\dots):= \sum_{s:\;V\to\{0,1,\dots\}}(1+t)^{b(s)}\prod_{v\in V}x_{s(v)},
$$
where the sum ranges over all (not necessarily proper) colourings of $G$ by the colours $\{0,1,\dots\}$, and 
$b(s):=|\{uv\in E: s(u)= s(v)\}|$ denotes the number of monochromatic edges of $s$. Note that $X(G)$ can be recovered from this polynomial.

In \cite{NW}, Noble and Welsh introduced  the  $U$-polynomial and showed that it is equivalent to $XB(G)$.
\begin{definition}
\label{def.u}
Let $G$ be a graph, the {\em U-polynomial} is  
$$
U(G;z,x_1,x_2,\dots):=\sum_{A\subseteq E(G)}x(\tau_A)(z-1)^{|A|-|V|+ k(A)},
$$
where $\tau_A= (n_1\geq n_2 \geq \dots \geq n_{l(A)})$ is the partition of the number $|V|$ determined by the  sizes of the $l(A)$ connected components of the spanning subgraph $(V,A)$ of $G$ (so  the $i$-th component has $n_i$ vertices), 
and $x(\tau_A)= x_{n_1}\dots x_{n_l}$.
\end{definition} 
Sarmiento proved in \cite{Sa} that the  $U$-polynomial, and hence $XB$, is also equivalent to  Brylawski's  polychromate from  \cite{Br}. In \cite{EMM12}, it was shown that $U(G)$ has a close connection with the Potts model (see Section~\ref{pm} for a discussion of the Potts model):
for $x_i= ( \sum_{j=1}^k  \e{i\beta  H_j} )/(\e{\beta J}-1)$,
\begin{equation}\label{upotts}
U(G,k, x_0, x_1, \dots)= (\e{\beta J}-1)^{|V|} \sum_{s:V\rightarrow \{0,\ldots, k-1\}} \e{ \sum_{v\in V}\beta H_{s(v)} }  e^{\beta E(s)},
\end{equation}
where $E(s)= \beta J \sum_{uv\in E(G)}\delta(s(u), s(v))$ is the Potts model energy of state $s$ when all the coupling constants are equal to $J$; $H_{s(v)} $ is the energy contribution of an external field for a site in state $s(v)$; $\delta$ is the Kronecker delta function defined by $\delta(a,b)= 1$ if $a=b$, and $\delta(a,b)=0$ otherwise.

\medskip

We now turn our attention to our second family of graph polynomials. This family of polynomials arises from the theory of Vassiliev and quantum knot invariants (see \cite{Lo07}).  In what follows, if $q$ is a variable then its range is the non-negative integers. The ranges of all of the other variables appearing in this paper are the reals.  Generally,  $k$ denotes a non-negative integer. 

The {\em $q$-chromatic function}, \cite{Lo07}, of a graph $G=(V,E)$ is 
\begin{equation*}
M_q(G;k):=\sum_{s\in \col(G;k)} q^{\sum_{v\in V} s(v)}.
\end{equation*}
It  was shown in \cite{Lo07} that the $q$-chromatic function can be  expressed as a sum over spanning subgraphs:
$$
M_q(G,k)=\sum_{A\subseteq E} (-1)^{|A|} \prod_{W\in\com(A)} (k)_{q^{|W|}},
$$
where $\com(A)$ denotes the set of connected components of the spanning subgraph $(V,A)$, $|W|$ denotes the number of vertices in the component $W$, and $(k)_n=k(k-1)\dots(k-n+1)$. This expression leads naturally to the  {\em q-dichromate} which is defined as
$$
B_q(G,x,y)=\sum_{A\subseteq E} x^{|A|}\prod_{W\in\com(A)} (y)_{q^{|W|}}.
$$
 It was shown in \cite{Lo07} that for each real $J$,
\begin{equation}\label{bpotts}
B_q(G,e^{J}-1,k)=\sum_{s:V\rightarrow \{0,\ldots, k-1\}}q^{\sum_{v\in V}s(v)}e^{E(P^k)(s)},
\end{equation}
where $E(P^k)(s)= \sum_{uv\in E(G)}J\delta(s(u), s(v))$ is the Potts model energy of state $s$ when all the coupling constant are equal to $J$.

\medskip

Comparing  Equations~\ref{upotts} and \ref{bpotts} gives that both families of polynomials are closely connected to the Potts model partition function. Moreover, it becomes apparent that the two families are closely related to each other. In fact, it is easily seen that  for a positive integer $k$,
$$
x^{-|V|}B_q(G,x,k)=  U(G;z,x_1,x_2,\dots)|_{z:= x+1, x_i:= x^{-1}(k)_{q^i}},
$$ 
so every polynomial in the second family can be obtained from $U$, $XB$, and from Brylawski's polychromate. In  \cite{Lo07}, the second author conjectured that the converse holds:
\begin{conjecture}[\cite{Lo07}]\label{c.qu}
The $q$-dichromate is equivalent to the $U$-polynomial.
\end{conjecture}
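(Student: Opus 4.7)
Since the excerpt has already established that $U$ determines $B_q$ (via the substitution $z:=x+1$, $x_i:=x^{-1}(k)_{q^i}$), only the converse requires work. The strategy is to group the sum defining $B_q(G,x,y)$ by the partition $\tau_A$ of $n:=|V(G)|$ from Definition~\ref{def.u}. Writing $\ell(\tau)$ for the number of parts of $\tau$ and setting
\begin{equation*}
B_\tau(G;x):=\sum_{A:\,\tau_A=\tau} x^{|A|}, \qquad F_\tau(y;q):=\prod_{i=1}^{\ell(\tau)}(y)_{q^{n_i}}
\end{equation*}
for $\tau=(n_1,\ldots,n_{\ell(\tau)})$, the definition of $B_q$ factors as $B_q(G,x,y)=\sum_{\tau\vdash n}B_\tau(G;x)\,F_\tau(y;q)$. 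The same grouping applied to $U$ yields $U(G;z,x_1,\ldots)=\sum_\tau x(\tau)\,U_\tau(G;z)$ with $U_\tau(G;z):=\sum_{A:\,\tau_A=\tau}(z-1)^{|A|-n+\ell(\tau)}$. Any spanning subgraph with $\ell(\tau)$ components has at least $n-\ell(\tau)$ edges, so $B_\tau(G;x)$ is divisible by $x^{n-\ell(\tau)}$ and $U_\tau(G;z)=(z-1)^{\ell(\tau)-n}\,B_\tau(G;z-1)$. It therefore suffices to recover each $B_\tau(G;x)$ from $B_q(G,x,y)$.

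The heart of the argument is a linear independence lemma: for any integer $q>n$, the polynomials $\{F_\tau(y;q):\tau\vdash n\}$ are linearly independent in $\mathbb{R}[y]$. I would prove this by comparing $y$-degrees: $\deg_y F_\tau(y;q)=\sum_{j\geq 1}a_j(\tau)\,q^j$, where $a_j(\tau)$ is the multiplicity of $j$ as a part of $\tau$. Since $\sum_j j\,a_j(\tau)=n$, we have $a_j(\tau)\leq n$ for every $\tau\vdash n$, so for $q>n$ the sequence $(a_j(\tau))_{j\geq 1}$ coincides with the base-$q$ digit expansion of the non-negative integer $\sum_j a_j(\tau)\,q^j$. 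Uniqueness of base-$q$ representations then forces different partitions to yield different $y$-degrees, hence pairwise linear independence over $\mathbb{R}$, and so also over $\mathbb{R}[x]$, which is the coefficient ring when $B_q$ is viewed as a polynomial in $y$.

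With this lemma, extracting each $B_\tau(G;x)$ from $B_q(G,x,y)$ is a triangular substitution ordered by the degrees above. To respect the convention that the graph $G$ is not known at query time, we first query $B_1(G,0,y)=y^n$ to read off $n=|V(G)|$, and only then query $B_q(G,x,y)$ for any single $q>n$; assembling $U(G;z,x_1,x_2,\ldots)=\sum_{\tau\vdash n}x(\tau)(z-1)^{\ell(\tau)-n}B_\tau(G;z-1)$ completes the determination of $U$ from $B_q$. The principal obstacle I foresee is really only the linear independence lemma and making the bound $q>n$ effective: were the base-$q$ argument to fail for small $q$ through accidental collisions in $\sum_j a_j(\tau)q^j$, one would have to combine evaluations at several $q$, but the multiplicity bound $a_j(\tau)\leq n$ makes any single $q>n$ suffice cleanly.
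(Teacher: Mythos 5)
There is a genuine gap, and it is located exactly where you anticipated: the linear independence lemma. Your argument reads $(y)_{q^{|W|}}$ as the falling factorial $y(y-1)\cdots(y-q^{|W|}+1)$, so that $F_\tau(y;q)$ is a polynomial in $y$ of degree $\sum_i q^{n_i}$ and a base-$q$ digit argument separates the partitions. But in the $q$-dichromate the symbol $(y)_{q^{|W|}}$ denotes the $q^{|W|}$-analogue $1+q^{|W|}+q^{2|W|}+\dots+q^{|W|(y-1)}=(q^{|W|y}-1)/(q^{|W|}-1)$, not a falling factorial of length $q^{|W|}$. (This is forced by the paper's Equation~\ref{bpotts}: at $J=0$ the right-hand side is $(1+q+\dots+q^{k-1})^{|V|}$, which matches $\prod_v (k)_{q}$ only under the $q$-analogue reading; it is also what makes the link to Conjecture~\ref{c.ekv} work, since then $\prod_i (y)_{q^{n_i}}=c(\tau,y-1)$.) Under the correct reading your decomposition $B_q(G,x,y)=\sum_{\tau\vdash n}B_\tau(G;x)F_\tau(y;q)$ is exactly the "not difficult to observe" reduction mentioned in Section~\ref{sec.intro}, and your lemma is precisely Conjecture~\ref{c.ekv} --- which is the statement Proposition~\ref{refut} \emph{disproves} for $n\ge 39$. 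Indeed, for fixed $q$ each $F_\tau(\cdot\,;q)$ expands as $\prod_i(q^{n_iy}-1)/\prod_i(q^{n_i}-1)$, a linear combination of the $n+1$ exponentials $1,q^{y},q^{2y},\dots,q^{ny}$, so the $p(n)$ functions $y\mapsto F_\tau(y;q)$ span a space of dimension at most $n+1$ and cannot be linearly independent once $p(n)>n+1$. There is no degree filtration to make the extraction triangular.

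You should also be aware that the statement you are proving is an open conjecture in this paper, not a theorem: the authors do not prove Conjecture~\ref{c.qu}. What they do is (i) refute the stronger Conjecture~\ref{c.ekv}, thereby closing off exactly the route you propose, and (ii) prove the analogous equivalence for the modified invariant $B_{r,q}$, whose building blocks $\sum_{i=0}^{k-1}r^{|C|q^i}$ \emph{do} separate partitions (via a leading-term/lexicographic argument in Theorem~\ref{thm.eqq} that plays the role your degree argument was meant to play). A correct proof of Conjecture~\ref{c.qu} would have to recover the coefficients $B_\tau(G;x)$ (or at least the values of $U$) despite the linear dependencies among the $F_\tau$, presumably by exploiting that the vectors $(B_\tau(G;x))_\tau$ arising from graphs are constrained; nothing in your proposal addresses this.
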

 The significance of Conjecture~\ref{c.qu} is that, if it is true, it would allow the polynomials in the first family, each of which requires an infinite number of variables, to be written as a natural function in finitely many variables. 
In fact, a conjecture stronger that Conjecture~\ref{c.qu} was made in \cite{Lo07}.  For this conjecture,  let $\tau= (n_1\geq n_2\geq\dots\geq n_k)$ be a partition 
of $n$. We let $c(\tau)= (c(\tau, y))_{y=0,1,\dots}$ be an infinite sequence of polynomials in $q$ defined by 
$$
c(\tau, y)= \prod_{i=1}^k(q^{n_iy}+q^{n_i(y-1)}+\dots +1).
$$
\begin{conjecture}[\cite{Lo07}]
\label{c.ekv}
Only a trivial rational linear combination of sequences  $c(\tau)$ is identically zero.
\end{conjecture}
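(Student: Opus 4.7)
The plan is to establish the $\Q$-linear independence of the sequences $c(\tau)$ by reducing, via stabilization in the large-$y$ regime, to a linear independence question for an explicit family of rational functions. Suppose $\sum_{\tau} a_{\tau}\, c(\tau,y)=0$ in $\Q[q]$ for every $y\ge 0$, with rational scalars $a_{\tau}$ of finite support, and set $N:=\max\{|\tau|:a_{\tau}\ne 0\}$. Because $c(\tau,y)=\prod_i(q^{n_iy}+\cdots+1)$ is a polynomial in $q$ of degree exactly $|\tau|\,y$ with leading coefficient $1$, partitions of size $M<N$ contribute $0$ to the coefficient $[q^{Ny-k}]$ once $y\ge k+1$ (as then $My<Ny-k$), while for $\tau\vdash N$ and $y\ge k$, the substitution $d_i=y-j_i$ in the expansion of $c(\tau,y)$ gives
\[
[q^{Ny-k}]\,c(\tau,y)\;=\;r_{k}^{\tau}\;:=\;\bigl|\{(d_1,\ldots,d_{k(\tau)})\in\Z_{\ge 0}^{k(\tau)}:\textstyle\sum_{i}d_i n_i=k\}\bigr|.
\]

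Letting $y\to\infty$ for each fixed $k$ then forces $\sum_{\tau\vdash N}a_{\tau}\,r_k^{\tau}=0$ for every $k\ge 0$, which is the formal power-series identity $\sum_{\tau\vdash N}a_{\tau}\prod_i 1/(1-z^{n_i})=0$. Writing $D_{\tau}(z):=\prod_i(z^{n_i}-1)$ and using $\prod_i(1-z^{n_i})=(-1)^{k(\tau)}D_{\tau}(z)$, this is equivalent to a $\Q$-linear dependence among the rational functions $\{1/D_{\tau}(z):\tau\vdash N\}$. I would prove the $\Q$-linear independence of the latter by a pole analysis: with the cyclotomic factorization $D_{\tau}(z)=\prod_{d}\Phi_{d}(z)^{e_{d}(\tau)}$, where $e_{d}(\tau):=|\{i:d\mid n_i\}|$, the pole order of $1/D_{\tau}(z)$ at a primitive $d$-th root of unity is $e_d(\tau)$, and processing the values of $d$ in decreasing order peels off one (or, when several partitions share the maximal order, a small block of) partition(s) at a time: at $\zeta_N$ only $\tau=(N)$ has a pole, at $\zeta_{N-1}$ only $\tau=(N-1,1)$, and so forth. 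At shared poles the residues live in $\Q(\zeta_{d})$, and decomposing the vanishing constraint along the $\Q$-basis $\{1,\zeta_{d},\ldots,\zeta_{d}^{\varphi(d)-1}\}$ of $\Q(\zeta_d)$ yields $\varphi(d)$ independent rational equations. An induction on $N$ then forces all $a_{\tau}=0$.

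The main obstacle is the last step: verifying that at every cyclotomic pole shared by several partitions of $N$, the $\Q(\zeta_d)$-valued residues of the corresponding $1/D_{\tau}(z)$ really are $\Q$-linearly independent. This is delicate because the numerator polynomials $g_{\tau}(u):=\prod_i(u^{n_i}-1)$ themselves carry non-trivial $\Q$-linear relations (for instance $-4g_{(3,1)}+3g_{(2,2)}+g_{(1,1,1,1)}=0$ among partitions of $4$), so the proof cannot proceed by a soft numerator-independence claim and must use the distinctness of the denominators in an essential way. If for some $N$ the residues at a shared cyclotomic pole happened to satisfy an accidental rational relation compatible with all of the other cyclotomic constraints, the inductive peeling would fail --- and one would in fact obtain an explicit counterexample to Conjecture~\ref{c.ekv}.
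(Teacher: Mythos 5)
The statement you are trying to prove is false, and the paper's entire treatment of it (Section~\ref{sec.dep}, Proposition~\ref{refut}) is a \emph{disproof}: for every $n$ the sequences $\{c(\tau)\;|\;\tau\vdash n\}$ span a $\Q$-vector space of dimension at most $\frac{n^3+n+2}{2}$, so they are linearly dependent as soon as $p(n)>\frac{n^3+n+2}{2}$, which holds for all $n\ge 39$. So no completion of your argument can exist. In fact your own reduction already runs into this wall. The passage from $\sum_\tau a_\tau c(\tau,y)=0$ to $\sum_{\tau\vdash N}a_\tau\prod_i(1-z^{n_i})^{-1}=0$ via the stable coefficients $r_k^\tau$ is fine, but the family $\{1/D_\tau(z)\;|\;\tau\vdash N\}$ whose independence you then need is itself linearly dependent for all $N\ge 12$: multiplying by $P(z)=\prod_{j=1}^{N}(z^j-1)$ (which every $D_\tau$ divides, by the cyclotomic multiplicity count) turns these $p(N)$ rational functions into $p(N)$ polynomials of degree $\binom{N}{2}$, and $p(N)>\binom{N}{2}+1$ already at $N=12$ ($77>67$). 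So the ``delicate verification'' you flag in your last paragraph is not delicate --- it is impossible, and the pole-peeling induction necessarily breaks at the small cyclotomic factors where many partitions share a pole.

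Your fallback claim --- that a failure of residue independence would itself hand you a counterexample to Conjecture~\ref{c.ekv} --- is also not correct, because your reduction is one-directional. A relation $\sum_{\tau\vdash N}a_\tau\prod_i(1-z^{n_i})^{-1}=0$ only forces the cancellation of the coefficients $[q^{Ny-k}]$ (and, by palindromicity, $[q^k]$) of $\sum a_\tau c(\tau,y)$ in the stable range $k\le y$; it says nothing about the middle coefficients, so it does not produce an identically zero combination of the sequences $c(\tau)$. This is why the paper works with two variables: writing $c(\tau,q,z)=b(\tau,z)/\prod_i(q^{n_i}-1)$ with $z=q^{y+1}$ and clearing denominators gives the products $s(\tau,q,z)=a(\tau,q)\,b(\tau,z)$, which live in a space of dimension $(1+\binom{n}{2})(1+n)=\frac{n^3+n+2}{2}$; a dependence among \emph{these} does survive every substitution $z=q^{y+1}$ and hence genuinely refutes the conjecture. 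The price of needing the two-variable products rather than the one-variable denominators alone is the cubic rather than quadratic bound, whence the threshold $n\ge 39$ instead of $n\ge 12$. If you want to salvage something from your computation, the right conclusion to draw from it is an upper bound on where the conjecture could fail, not a proof that it holds.
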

It is not difficult to observe that Conjecture \ref{c.ekv} implies that the $q$-dichromate determines the $U$-polynomial, and hence resolves Conjecture \ref{c.qu}.

In this paper we resolve  Conjecture \ref{c.ekv} in the negative: we show that it is false if $n\ge 39$.   We then go on to define and study a variant of the $q$-dichromate, namely $B_{r,q}(G;x,k)$, describe its properties and finally establish that $B_{r,q}(G;x,k)$ is equivalent to the $U$-polynomial. This shows that  Conjecture \ref{c.qu} holds for a variant of $B_q$.

\section{Dependency of integer partitions}
\label{sec.dep}

In this section we examine Conjecture~\ref{c.ekv}.
We use $\tau\vdash n$ to denote that $\tau=(n_1\geq n_2\geq\dots\geq n_k)$ is a partition of $n$, that is, 
the $n_i$ are positive
integers summing up to $n$, listed in non-increasing order. We disprove 
Conjecture \ref{c.ekv} by showing that if  $n$ is large enough (as we will see, $n\ge39$
suffices) then there exist fractions $\{\alpha_{\tau}\;|\;\tau\vdash n\}$, not all of them zero, such that for every
$y=0,1,2,\dots$,
$$
\sum_{\tau\vdash n}\alpha_{\tau}\cdot c(\tau,y)
$$
is a zero polynomial in $q$. It is helpful to collect the polynomials $c(\tau,y)$ in the matrix $M(n)$, where
$
M(n)=(c(\tau,y))_{\tau,y},
$
with $p(n)$ rows indexed by all partitions $\tau$ of $n$ and infinitely many columns indexed by the non-negative integers
$y$. Here $p(n)$ denotes the number of partitions of $n$. Let $\mathrm{rank}_{\Q}M(n)$ be the rank of $M(n)$
over the field $\Q$, that is, the maximum number of linearly independent rows. Our disproval of Conjecture \ref{c.ekv} 
has the following quantitative form. 

\begin{proposition}\label{refut}
For every $n=1,2,\dots$,
$$
\mathrm{rank}_{\Q}M(n)\le\frac{n^3+n+2}{2}\;.
$$
In other words, if $m>\frac{n^3+n+2}{2}$ and $\tau(1),\tau(2),\dots,\tau(m)$ are $m$ distinct partitions of $n$, then there exist fractions $\alpha_1,\alpha_2,\dots,\alpha_m$, not all of them zero, such that for every $y=0,1,2,\dots$,
$$
\sum_{i=1}^m\alpha_i\cdot c(\tau(i),y)
$$
is a zero polynomial in $q$. In particular, if $p(n)>\frac{n^3+n+2}{2}$, which is true for every $n\ge n_0$, 
Conjecture \ref{c.ekv} does not hold. 
\end{proposition}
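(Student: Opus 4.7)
The plan is to reformulate the statement as a $\Q$-rank computation on tuples in $\Q(q)^{n+1}$, where a cyclotomic degree bound yields the claimed estimate.

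First I would rewrite
\begin{equation*}
c(\tau,y) \;=\; \prod_{i=1}^k \frac{q^{n_i(y+1)}-1}{q^{n_i}-1} \;=\; \frac{1}{Q_\tau(q)} \sum_{s=0}^n M_{\tau,s}\, q^{s(y+1)},
\end{equation*}
where $Q_\tau(q):=\prod_{i=1}^k(q^{n_i}-1)$, and where expanding $\prod_i(q^{n_i(y+1)}-1)=\sum_{S\subseteq[k]}(-1)^{k-|S|} q^{(y+1)n_S}$ (with $n_S:=\sum_{i\in S}n_i$) and grouping by $s=n_S$ produces the integers $M_{\tau,s}:=\sum_{S\subseteq[k],\,n_S=s}(-1)^{k-|S|}$. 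In this form each row $r_\tau=(c(\tau,y))_{y\ge0}$ is a $\Q(q)$-linear combination of the $n+1$ $\tau$-independent sequences $y\mapsto q^{s(y+1)}$, $s=0,1,\dots,n$, with coefficients $M_{\tau,s}/Q_\tau(q)$.

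Next I would show that $\mathrm{rank}_\Q M(n)=\dim_\Q\mathrm{span}_\Q\{v_\tau:\tau\vdash n\}$, where $v_\tau:=(M_{\tau,s}/Q_\tau(q))_{s=0}^n\in\Q(q)^{n+1}$. If $\sum_\tau \alpha_\tau c(\tau,y)=0$ for every $y\ge0$, then with $\gamma_s(q):=\sum_\tau\alpha_\tau M_{\tau,s}/Q_\tau(q)\in\Q(q)$ we have $\sum_{s=0}^n \gamma_s(q)\,q^{s(y+1)}=0$ for every $y$. Specializing $y=0,1,\dots,n$ gives a Vandermonde system in the points $q,q^2,\dots,q^{n+1}$ whose determinant $\prod_{0\le j<j'\le n}(q^{j'+1}-q^{j+1})$ is a nonzero element of $\Q[q]$, forcing every $\gamma_s\equiv 0$. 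The reverse implication is immediate.

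The final step is a degree count. Set $\widetilde Q(q):=\prod_{d=1}^n \Phi_d(q)^{\lfloor n/d\rfloor}$, where $\Phi_d$ is the $d$-th cyclotomic polynomial. The factorization $q^m-1=\prod_{d\mid m}\Phi_d(q)$ shows that the multiplicity of $\Phi_d$ in $Q_\tau$ is $|\{i:d\mid n_i\}|\le \lfloor n/d\rfloor$ (since $d$ times this count is at most $\sum_i n_i=n$), so $Q_\tau\mid\widetilde Q$ for every $\tau\vdash n$. A standard double-counting argument gives
\begin{equation*}
\deg\widetilde Q=\sum_{d=1}^n\phi(d)\lfloor n/d\rfloor=\sum_{m=1}^n\sum_{d\mid m}\phi(d)=\sum_{m=1}^n m=\frac{n(n+1)}{2}.
\end{equation*}
Multiplication by $\widetilde Q$ is a $\Q$-linear injection sending $v_\tau$ to $(\widetilde Q/Q_\tau)(q)\cdot(M_{\tau,0},\dots,M_{\tau,n})$, whose entries are polynomials in $q$ of degree at most $n(n+1)/2-n=n(n-1)/2$. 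The image therefore lies in the $\Q$-vector space of $(n+1)$-tuples of polynomials in $q$ of degree $\le n(n-1)/2$, which has dimension $(n+1)\bigl(n(n-1)/2+1\bigr)=(n+1)(n^2-n+2)/2=(n^3+n+2)/2$, bounding $\mathrm{rank}_\Q M(n)$ as required.

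The step I expect to be the main obstacle is the rank reduction in the second paragraph: recognizing that the infinite-dimensional row $r_\tau$ is faithfully encoded by a tuple $v_\tau\in\Q(q)^{n+1}$, via the $\Q(q)$-linear independence of the universal sequences $(q^{s(y+1)})_{y\ge 0}$. Once this is in hand, the sharp constant $1/2$ in the bound comes out of a single identity, $\deg\widetilde Q=n(n+1)/2$, reflecting the fact that $\widetilde Q$ is precisely the least common multiple of $\{Q_\tau:\tau\vdash n\}$.
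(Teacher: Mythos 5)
Your proof is correct and is essentially the paper's argument in different clothing: your $\widetilde Q(q)=\prod_{d=1}^n\Phi_d(q)^{\lfloor n/d\rfloor}$ is exactly the paper's $P(q)=\prod_{m=1}^n(q^m-1)$, your tuple $\widetilde Q\cdot v_\tau$ is precisely the coefficient vector in $z$ of the paper's bivariate polynomial $s(\tau,q,z)=a(\tau,q)\,b(\tau,z)$ (with $z=q^{y+1}$), and the dimension count $(n+1)\bigl(\binom{n}{2}+1\bigr)=\frac{n^3+n+2}{2}$ is identical. The only genuine addition is your Vandermonde step, which establishes the converse implication (not needed for the upper bound) and thereby identifies $\mathrm{rank}_{\Q}M(n)$ with $\dim_{\Q}\mathrm{span}_{\Q}\{v_\tau\}$ rather than merely bounding it.
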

\begin{proof}
We have
$$
c(\tau,y)=\frac{\prod_{i=1}^k (q^{n_i(y+1)}-1)}{\prod_{i=1}^k (q^{n_i}-1)}\;.
$$
We regard $y$ as a formal variable, along with $q$, and introduce another formal variable $z=q^{y+1}$. 
Then
$$
c(\tau,q,z)=\frac{\prod_{i=1}^k(z^{n_i}-1)}{\prod_{i=1}^k(q^{n_i}-1)}
$$
is a rational function in $\Q(q,z)$. The polynomial
$
P(q)=(q-1)(q^2-1)\dots(q^n-1)
$
is divisible, in $\Z[q]$, by any of the denominators $(q^{n_1}-1)(q^{n_2}-1)...(q^{n_k}-1)$. This follows from the fact that the multiplicity of any root of the denominator polynomial, 
which is a primitive $r$-th root of unity for some $r\le n$, is majorized by its multiplicity
as a root of $P(q)$: the former multiplicity is equal to  the number of parts $n_i$ in $\tau$ that are divisible 
by $r$, which is at most $\lfloor n/r\rfloor$, and $\lfloor n/r\rfloor$ is exactly equal to the latter multiplicity as 
$\lfloor n/r\rfloor$ counts the multiples of $r$ among $1,2,\dots,n$. Denoting the  polynomial  $ P(q)/ \prod_{i=1}^k(q^{n_i}-1) $ by $a(\tau,q)$,   the numerator polynomial $(z^{n_1}-1)(z^{n_2}-1)\dots(z^{n_k}-1)$ by $b(\tau,z)$, and setting
$$
s(\tau,q,z)=P(q)\cdot c(\tau,q,z)=a(\tau,q)\cdot b(\tau,z)\;, 
$$
we see that
$
S(n):=\{s(\tau,q,z)\;|\;\tau\vdash n\}
$
is a set of $p(n)$ polynomials $s(\tau)\in\Q[q,z]$, each of which factors into the product of $a(\tau)\in\Z[q]$ with 
degree $1+2+\dots+n-n=\binom{n}{2}$ and $b(\tau)\in\Z[z]$ with degree $n$. The linear 
dimension of $S(n)$ as a subset of the vector space $\Q[q,z]$ over $\Q$ is therefore bounded by
$$
\dim_{\Q} S(n)\le(1+\binom{n}{2})(1+n)=\frac{n^3+n+2}{2}\;,
$$
because each $s(\tau)\in S(n)$ is a $\Z$-linear combination of the monomials $q^iz^j$ with $0\le i\le\binom{n}{2}$ 
and $0\le j\le n$. Thus more than $\frac{n^3+n+2}{2}$ polynomials $s(\tau,q,z)$ are always linearly dependent. 
The $\tau$-th row of $M(n)$ equals
$$
P(q)^{-1}\cdot(s(\tau,q,q),s(\tau,q,q^2),s(\tau,q,q^3),\dots)\;.
$$
Hence more than $\frac{n^3+n+2}{2}$ rows are linearly dependent too (because any linear combination of the polynomials
$s(\tau,q,z)$ is preserved by the substitutions $z=q^{y+1}$, $y=0,1,2,\dots$), and this quantity bounds the rank of the matrix $M(n)$ over $\Q$.
\end{proof}

\noindent
Thus Conjecture~\ref{c.ekv} is false whenever the partition function $p(n)$ satisfies
$
p(n)>\frac{n^3+n+2}{2}
$. 
Since $p(n)$ is at least $\frac{1}{5!}$ of the number of quintuples $(a_1,a_2,\dots,a_5)\in\mathbb{Z}_+^5$  with $a_1+a_2+\dots+a_5=n$, 
that is, $p(n)\ge\frac{1}{120}\binom{n-1}{4}\gg n^4$, the  inequality $p(n)>\frac{n^3+n+2}{2}$    holds for some sufficiently large $n$. 
In fact, the computer algebra package Maple can be used to show that the inequality holds if and only if $n\ge 39$.

A disproval of Conjecture~\ref{c.ekv} was published first in the technical report \cite{klaz_loeb} in a more complicated
form. In \cite{klaz_loeb}, M. Loebl offered a weaker conjecture that the quantities $c(\tau,y,q)$, $\tau\vdash n$, are 
linearly independent over $\Q$ when regarded as bivariate functions of the real variables $q,y\ge 0$. 
However, the proof of Proposition~\ref{refut} works without change in this situation too and shows that  the 
weaker conjecture also fails to hold. A natural question is what is the smallest integer $n_0$ for which Conjecture~\ref{c.ekv}
is not valid. We have proven above that $n_0\le39$, and it is  shown in \cite{klaz_loeb} that $n_0\ge7$. We hope to address
this and other questions related to Conjecture~\ref{c.ekv} elsewhere.

\section{Deformations of the chromatic polynomial}
\label{sec.deff}

In this section we define a variant of the $q$-dichromate, and show that it is equivalent to the $U$-polynomial. 

\subsection{The $(r,q)$-chromatic function}

\begin{definition}\label{d.1}
Let $k\in\mathbb{N}$.  We define the {\em $(r,q)$-chromatic function} as
\begin{equation*}
M_{r,q}(G;k):=\sum_{s\in \col(G;k)}  r^{\sum_{v\in V} q^{s(v)}}.
\end{equation*}
\end{definition}
The chromatic polynomial can be recovered as $M_{1,q}(G;k)$, and the $q$-chromatic function $M_r(G,k)$ can be obtained from $M_{r,q}(G;k)$ by replacing each $q^i$ with its exponent $i$. 

Just as for $B_q$,  the function $M_{r,q}$ can be written as a sum over spanning subgraphs:
\begin{proposition}\label{p.1} Let $G=(V,E)$ be a graph. Then 
\begin{equation}\label{e.p1}    
M_{r,q}(G;k)=\sum_{A\subseteq E(G)}  (-1)^{|A|}  \prod_{C\in \com(A)}   \sum_{i=0}^{k-1}   r^{|C| q^{i}},
\end{equation}

\end{proposition}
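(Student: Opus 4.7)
The plan is to expand $M_{r,q}(G;k)$ from its definition, replace the "proper colouring" constraint by a product of indicators, and apply the standard Whitney-type inclusion--exclusion trick.

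First I would rewrite the defining sum over proper colourings as a sum over \emph{all} colourings with an edge factor:
$$M_{r,q}(G;k)=\sum_{s:V\to\{0,\dots,k-1\}} r^{\sum_{v\in V}q^{s(v)}}\prod_{uv\in E}\bigl(1-[s(u)=s(v)]\bigr).$$
Expanding the product over edges yields
$$M_{r,q}(G;k)=\sum_{A\subseteq E}(-1)^{|A|}\sum_{s:V\to\{0,\dots,k-1\}} r^{\sum_{v\in V}q^{s(v)}}\prod_{uv\in A}[s(u)=s(v)].$$
Swapping the two sums is the key algebraic step; it is formal because everything is a finite sum.

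Next I would evaluate the inner sum for a fixed $A$. The indicator $\prod_{uv\in A}[s(u)=s(v)]$ is nonzero precisely when $s$ is constant on each connected component of the spanning subgraph $(V,A)$. Such an $s$ is determined by a choice of colour $c_C\in\{0,\dots,k-1\}$ for each $C\in\com(A)$, and then $\sum_{v\in V}q^{s(v)}=\sum_{C\in\com(A)}|C|\,q^{c_C}$. Therefore
$$\sum_{s\text{ constant on }A\text{-components}} r^{\sum_{v\in V}q^{s(v)}}=\prod_{C\in\com(A)}\sum_{i=0}^{k-1}r^{|C|q^{i}},$$
using that $r^{a+b}=r^a r^b$ to factor the exponential across components. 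Substituting this into the previous display gives exactly Equation~\eqref{e.p1}.

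I do not anticipate a genuine obstacle; the whole argument is a direct analogue of the standard derivation that expresses the chromatic polynomial (and $M_q$ in \cite{Lo07}) as a signed sum over spanning subgraphs. The only point that merits a word of care is the factorisation of $r^{\sum_v q^{s(v)}}$ across components, but this is just the usual property $r^{x+y}=r^x r^y$ applied to the partition of $V$ induced by $\com(A)$.
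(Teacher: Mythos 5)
Your proof is correct, but it takes a different route from the paper's. You use the direct inclusion--exclusion (Whitney-rank-type) expansion: replace the properness constraint by $\prod_{uv\in E}\bigl(1-[s(u)=s(v)]\bigr)$, expand over subsets $A\subseteq E$, and evaluate the inner sum by noting that the surviving colourings are exactly those constant on the components of $(V,A)$. All the steps are sound, including the factorisation of $r^{\sum_v q^{s(v)}}$ across components and the handling of loops and multiple edges (the indicator is idempotent, so parallel edges cause no trouble). The paper explicitly acknowledges that this direct argument works, but deliberately takes another path: it first generalises the statement to vertex-weighted graphs (its Proposition~4, the formula for $M^{\omega}_{r,q}$), proves a deletion--contraction identity $M^{\omega}_{r,q}(G;k)=M^{\omega}_{r,q}(G-e;k)-M^{\omega}_{r,q}(G/e;k)$ for non-loop edges, and then establishes the subset expansion by induction on the number of non-loop edges, recovering Proposition~2 as the unit-weight special case. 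What the paper's detour buys is the weighted machinery itself: the deletion--contraction identity and the weighted expansion are reused later (for $B^{\omega}_{r,q}$ and for the Potts-model identification in Lemma~3), and contraction forces weights on you since merged vertices must carry the sum of the original weights. Your argument is shorter and self-contained for the unweighted statement; it would also adapt to the weighted case with only notational changes (replace $|C|$ by $\omega(C)$), so nothing essential is lost, but you would then be proving the generalisation anyway.
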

Although Proposition~\ref{p.1} can be proven directly using the inclusion-exclusion principle, here we will give an alternative poof that uses vertex-weighted graphs. The advantage of this  approach, as we will see, is that it allows for more general results.  We therefore postpone the proof of  Proposition~\ref{p.1} until after we introduce and discuss a vertex-weighted analogue of $M_{r,q}(G;k)$.

Here, a {\em vertex-weighted graph} consists of a graph $G=(V,E)$ and a  {\em weight function} $\omega:  V\rightarrow \mathbb{N}$.  The {\em weight} of the vertex $v$ is the value $\omega (v)$. 
If $G$ is a vertex-weighted graph with weight function $\omega$, and $e$ is an edge of $G$, then $G-e$ is the vertex-weighted graph obtained by deleting the edge $e$ of $G$ and leaving the weight function unchanged. If $e$ is any non-loop edge of $G$, then $G/e$ is the vertex-weighted graph obtained from $G$ by contracting the edge $e$ and changing  the vertex weight function as follows: if $u$ and $v$ are the vertices incident to $e$, and $w$ is the vertex of $G/e$ created by the contraction, then $ \omega(w) :=  \omega(u) +\omega(v) $. All other vertices of $G/e$ have the same weight as the corresponding vertex in $G$.   Loops are not contracted.

\begin{definition}\label{d.2}
Let $k\in\mathbb{N}$, and let $G=(V,E)$ be a vertex-weighted graph with weight function $\omega$.  We define the {\em $(r,q)$-chromatic function} as
\begin{equation*}
M_{r,q}^{\omega}(G;k):=\sum_{s\in \col(G;k)}  r^{\sum_{v\in V}  \omega(v) q^{s(v)}}.
\end{equation*}
\end{definition}

Observe that if $G$ is a graph, and $G_1$ is the vertex-weighted graph obtained from $G$ be giving each vertex weight 1, then $M_{r,q}(G;k)=M_{r,q}^{\omega}(G_1,k)$.

The main advantage of considering the vertex-weighted polynomial, is that $M_{r,q}$ satisfies a deletion-contraction identity.
\begin{proposition}\label{p.2} 
Let $k\in\mathbb{N}$, and let $G=(V,E)$ be a vertex-weighted graph with weight function $\omega:V\rightarrow \mathbb{N}$. 
Then, if $e\in E$ is not a loop, 
\begin{equation*}
M_{r,q}^{\omega}(G;k)=M_{r,q}^{\omega}(G-e;k) -M_{r,q}^{\omega}(G/e;k). 
\end{equation*}
\end{proposition}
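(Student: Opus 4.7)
The plan is to mimic the classical deletion--contraction proof for the chromatic polynomial, carefully tracking how the weight function interacts with the exponent $\sum_{v} \omega(v) q^{s(v)}$ in the definition of $M_{r,q}^{\omega}$. Let $e = uv$ be the non-loop edge. I would partition the set $\col(G-e;k)$ of proper $k$-colourings of $G-e$ into two disjoint pieces according to whether $s(u) \neq s(v)$ or $s(u) = s(v)$. The first piece is exactly $\col(G;k)$, so it already contributes $M_{r,q}^{\omega}(G;k)$ to $M_{r,q}^{\omega}(G-e;k)$.

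Next I would exhibit a natural bijection between the second piece, namely the set of proper $k$-colourings of $G-e$ with $s(u) = s(v)$, and $\col(G/e;k)$. Given such an $s$, let $w$ denote the vertex of $G/e$ obtained by merging $u$ and $v$; the coloring $\tilde{s}$ of $G/e$ defined by $\tilde{s}(w) := s(u) = s(v)$ and $\tilde{s}(x) := s(x)$ for every other vertex $x$ is clearly proper on $G/e$ (no edge of $G/e$ other than those of $G-e$ is created, and incidences with $w$ correspond to incidences with $u$ or $v$ in $G-e$, none of which was monochromatic). The map $s \mapsto \tilde{s}$ is plainly invertible by splitting $w$ back into $u$ and $v$ with the same colour.

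The crucial compatibility check is on the exponent. Using $\omega(w) = \omega(u) + \omega(v)$ from the contraction rule, and writing $c := s(u) = s(v) = \tilde{s}(w)$, we compute
\begin{equation*}
\sum_{x \in V(G)} \omega(x) q^{s(x)} = \omega(u) q^{c} + \omega(v) q^{c} + \sum_{x \neq u,v} \omega(x) q^{s(x)} = \omega(w) q^{c} + \sum_{x \neq w} \omega(x) q^{\tilde{s}(x)},
\end{equation*}
so the two exponents agree and $r^{\sum \omega(x) q^{s(x)}}$ matches term by term under the bijection. Summing over both parts of the partition therefore yields
\begin{equation*}
M_{r,q}^{\omega}(G-e;k) = M_{r,q}^{\omega}(G;k) + M_{r,q}^{\omega}(G/e;k),
\end{equation*}
which rearranges to the claimed identity.

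I do not anticipate a real obstacle: the only subtle point is confirming that the weight additivity built into the contraction operation is precisely what is needed to make the exponents match, and that monitoring $s(u)=s(v)$ versus $s(u)\neq s(v)$ correctly recovers $\col(G;k)$ and $\col(G/e;k)$ respectively. Because $e$ is not a loop, $u \neq v$ and the bijection is well-defined; loops are excluded precisely to avoid the degenerate case $\omega(u) + \omega(v)$ doubling a single vertex's weight.
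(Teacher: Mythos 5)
Your proof is correct and follows essentially the same route as the paper: both split $\col(G-e;k)$ according to whether the endpoints of $e$ receive equal colours, identify the two pieces with $\col(G;k)$ and $\col(G/e;k)$ respectively, and use the additivity $\omega(w)=\omega(u)+\omega(v)$ under contraction to match the exponents $\sum_x \omega(x)q^{s(x)}$ term by term. No gaps.
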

\begin{proof}
Let $e=ab$ be a non-loop edge of $G$, and let $V=V(G)=V(G-e)$. By collecting together the colourings $s\in \col(G;k)$  in which $s(a) \neq s(b)$, and those in which $s(a) = s(b)$, we can write
\begin{equation}\label{e.mp}
M_{r,q}^{\omega}(G-e;k)=\sum\limits_{\stackrel{s\in \col(G-e;k)}{  s(a) \neq s(b)   }} r^{\sum_{v\in V}  \omega(v) q^{s(v)}}
+\sum\limits_{\stackrel{s\in \col(G-e;k)}{ s(a) = s(b) }}  r^{\sum_{v\in V}  \omega(v) q^{s(v)}}.
\end{equation} 
Each $s\in \col(G-e;k)$ with  $s(a) \neq s(b)$ induces (by setting $s'(w)=s(w)$ for each vertex $w$) a proper colouring $s'\in \col(G;k)$, and as $G$ and $G-e$ have the same vertex set and weight function, we can write the first sum on the right-hand side of Equation~\ref{e.mp} as $M_{r,q}^{\omega}(G;k)$.  Similarly, if $c$ is the vertex in $G/e$ obtained by contracting $e=ab$, each $s\in \col(G-e;k)$ with  $s(a) = s(b)$ induces a proper colouring $s'\in \col(G/e;k)$ by setting $s'(c)=s(a)$ and $s'(w)=s(w)$ for all of the other vertices $w$. Then, using the facts that   $\omega(c)=\omega(a)+\omega(b)$ and $s(a)=s(b)$,
\begin{multline*}
\sum\limits_{\stackrel{s\in \col(G-e;k)}{ s(a) = s(b) }}  r^{\sum_{v\in V}  \omega(v) q^{s(v)}}  
= \sum\limits_{\stackrel{s\in \col(G-e;k)}{ s(a) = s(b) }}   r^{\omega(a)q^{s(a)} +\omega(b)  q^{s(b) }}   r^{\sum_{v\in V\backslash \{a,b\}  }  \omega(v) q^{s(v)}}  
\\
=\sum\limits_{s\in \col(G/e;k)}    r^{\omega(c)q^{s(c)}} r^{\sum_{v\in V(G/e)\backslash \{c\}     }  \omega(v) q^{s(v)}} 
= M_{r,q}^{\omega}(G/e;k).
\end{multline*}
Thus $M_{r,q}^{\omega}(G-e;k)=M^{\omega}_{r,q}(G;k)  + M_{r,q}^{\omega}(G/e;k)$, as required.
\end{proof}

It is easy to verify that $M_{r,q}^{\omega}$ is multiplicative under disjoint unions of graphs. This fact, together with Proposition~\ref{p.2}; the evaluation $M_{r,q}^{\omega}(N;k) = \sum_{i=0}^{k-1} r^{\omega(v) q^i }$, for $N=(\{v\},\emptyset)$; and the fact that  $M_{r,q}^{\omega}(G;k)=0$ if $G$ contains a loop, provides a recursive definition for $M^{\omega}_{r,q}$.

The following result generalises Proposition~\ref{p.1}
\begin{proposition}\label{p.3} Let $k\in\mathbb{N}$, and let $G=(V,E)$ be a vertex-weighted graph with weight function $\omega:V\rightarrow \mathbb{N}$.  Then 
\begin{equation*}
M_{r,q}^{\omega}(G;k)=\sum_{A\subseteq E}  (-1)^{|A|}  \prod_{C\in \com(A)}   \sum_{i=0}^{k-1}   r^{\omega(C) q^{i}},
\end{equation*}
where $\com(A)$ denotes the set of connected components of the spanning subgraph $(V,A)$, and $\omega(C)$ denotes the total weight of the component $C$, that is, $\omega(C) = \sum_{v\in V(C)} \omega(v) $.
\end{proposition}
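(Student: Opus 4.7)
The plan is to show that the right-hand side, call it $R^{\omega}(G;k)$, obeys the same recursive scheme that (by Proposition~\ref{p.2}, multiplicativity, and the loop/single-vertex evaluations) uniquely determines $M_{r,q}^{\omega}(G;k)$, and then to induct on $|E|$. Concretely, I would verify four things: (i) $R^{\omega}$ agrees with $M_{r,q}^{\omega}$ on the one-vertex graph $N=(\{v\},\emptyset)$; (ii) $R^{\omega}$ vanishes on graphs with a loop; (iii) $R^{\omega}$ satisfies the deletion-contraction identity of Proposition~\ref{p.2} on any non-loop edge; and (iv) conclude by induction on the number of edges.

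For (i), the only $A\subseteq\emptyset$ is $A=\emptyset$ with a single component of weight $\omega(v)$, giving $R^{\omega}(N;k)=\sum_{i=0}^{k-1}r^{\omega(v)q^i}$, which matches $M_{r,q}^{\omega}(N;k)$. For (ii), if $e$ is a loop at $v$ then the spanning subgraphs $(V,A)$ and $(V,A\cup\{e\})$ have identical connected components (and hence identical component weights), while their contributions to $R^{\omega}$ carry opposite signs; pairing $A\subseteq E\setminus\{e\}$ with $A\cup\{e\}$ gives a sign-reversing involution, so $R^{\omega}(G;k)=0$.

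For (iii), let $e=ab$ be a non-loop edge of $G$. Split the sum defining $R^{\omega}(G;k)$ according to whether $e\in A$ or not. The subsets $A\subseteq E\setminus\{e\}$ contribute exactly $R^{\omega}(G-e;k)$. For a subset of the form $A\cup\{e\}$ with $A\subseteq E\setminus\{e\}$, there is a natural bijection between $\com(A\cup\{e\})$ in $G$ and $\com(A)$ in $G/e$: the unique component of $(V,A\cup\{e\})$ containing the edge $e$ corresponds to the component of $(V(G/e),A)$ containing the contracted vertex $c$, and since $\omega(c)=\omega(a)+\omega(b)$ every component has the same total weight in the two pictures. Hence
\begin{equation*}
\prod_{C\in\com(A\cup\{e\})}\sum_{i=0}^{k-1}r^{\omega(C)q^i}=\prod_{C'\in\com(A)\text{ in }G/e}\sum_{i=0}^{k-1}r^{\omega(C')q^i}.
\end{equation*}
Since $(-1)^{|A\cup\{e\}|}=-(-1)^{|A|}$, the second piece equals $-R^{\omega}(G/e;k)$. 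Thus
\begin{equation*}
R^{\omega}(G;k)=R^{\omega}(G-e;k)-R^{\omega}(G/e;k),
\end{equation*}
exactly the recursion of Proposition~\ref{p.2}. Induction on $|E|$, with the single-vertex base case, finishes the proof; multiplicativity on disjoint unions of both sides is immediate if one prefers to reduce to connected graphs first.

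The only real step requiring care is the component-bijection in (iii), where one must check that the weight function on $G/e$ is defined precisely so that the product over components is preserved when passing from $(G,A\cup\{e\})$ to $(G/e,A)$. This is the whole reason for introducing the vertex-weighted setup, and once stated it is essentially immediate; after that, everything else is bookkeeping with signs and an easy induction.
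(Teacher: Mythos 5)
Your proof is correct and follows essentially the same route as the paper: both arguments show that the subset-expansion satisfies the deletion--contraction recursion of Proposition~\ref{p.2} (via the split over $e\in A$ versus $e\notin A$ and the weight-preserving bijection between components of $(V,A\cup\{e\})$ and components of the corresponding subgraph of $G/e$), and then conclude by induction on the number of (non-loop) edges. Your explicit treatment of the loop case and the base cases only spells out what the paper dismisses as ``readily verified.''
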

\begin{proof}
Let
$ S(G):= \sum_{A\subseteq E}  (-1)^{|A|}  \prod_{C\in \com(A)}   \sum_{i=0}^{k-1}   r^{\omega(C) q^{i}}$.
We prove that $S(G)=M_{r,q}^{\omega}(G;k)$ by induction on the number of non-loop edges of $G$. 
The claim is readily verified if $G$ has no non-loop edges. Suppose that the claim is true for all graphs with $m>1$ non-loop edges. Let $G$ be a graph with $m+1$ non-loop edges, and  $e$ be a non-loop edge of $G$. Then  
\[
S(G)= \sum_{\stackrel{A\subseteq E(G)}{e\notin A} }  (-1)^{|A|}  \prod_{C\in \com(A)}   \sum_{i=0}^{k-1}   r^{\omega(C) q^{i}}
+ \sum_{\stackrel{A\subseteq E(G)}{e\in A} }  (-1)^{|A|}  \prod_{C\in \com(A)}   \sum_{i=0}^{k-1}   r^{\omega(C) q^{i}}.
\]
There is a natural bijection between the spanning subgraphs of $G$ that contain the edge 
$e$ and the spanning subgraphs of $G/e$ (given by $A\mapsto A\backslash \{e\}$). Using this correspondence and the obvious correspondence between the spanning subgraphs of $G-e$ and the spanning subgraphs of $G$ that do not contain $e$, we can write the above expression as
\[
S(G)= \sum_{A\subseteq E(G-e)}  (-1)^{|A|}  \prod_{C\in \com(A)}   \sum_{i=0}^{k-1}   r^{\omega(C) q^{i}}
+(-1)  \sum_{A\subseteq E(G/e)}  (-1)^{|A|}  \prod_{C\in \com(A)}   \sum_{i=0}^{k-1}   r^{\omega(C) q^{i}},
\]
and it follows by the inductive hypothesis and Proposition~\ref{p.2} that $S(G)=   M_{r,q}(G-e;k) -M_{r,q}(G/e;k)=M_{r,q}(G;k) $, as required.
\end{proof}

\begin{proof}[Proof of Proposition~\ref{p.1}]
The result follows as the special case of Proposition~\ref{p.3} when $\omega(v)=1$ for all $v\in V$. 
\end{proof}

The expansion in Equation~\ref{e.p1} motivates the introduction of the following polynomial.
\begin{definition}\label{d.3}
Let $k\in\mathbb{N}$, and let $G=(V,E)$ be a vertex-weighted graph with weight function $\omega$.  We define the {\em $(r,q)$-dichromatic function} as
\begin{equation*}
B_{r,q}(G;x,k):=\sum_{A\subseteq E(G)}  x^{|A|}  \prod_{C\in \com(A)}   \sum_{i=0}^{k-1}   r^{|C| q^{i}}.
\end{equation*}
\end{definition}
Just as with $M_{r,q}$, the polynomial  $B_{r,q}$ is best understood through its extension to vertex-weighted graphs. Accordingly, if $G=(V,E)$ is a vertex-weighted graph with weight function $\omega:V\rightarrow \mathbb{N}$ then we set
\begin{equation*}
B_{r,q}^{\omega}(G;x,k):=\sum_{A\subseteq E(G)}  x^{|A|}  \prod_{C\in \com(A)}   \sum_{i=0}^{k-1}   r^{\omega(C) q^{i}}.
\end{equation*}
We have that  $B^{\omega}_{r,q}$  and $B_{r,q}$  agree when $G$ is a vertex-weighted graph in which each vertex has weight 1.

$B^{\omega}_{r,q}$ satisfies a deletion-contraction definition:
\begin{lemma}\label{p.4} 
Let $k\in\mathbb{N}$, and let $G=(V,E)$ be a vertex-weighted graph with weight function $\omega:V\rightarrow \mathbb{N}$. 
Then if $e\in E$ is not a loop, 
\begin{equation}\label{e.p4a}    
B_{r,q}^{\omega}(G;x,k)=B_{r,q}^{\omega}(G-e;x,k) +x B_{r,q}^{\omega}(G/e;x,k);
\end{equation}
and if $e\in E$ is  a loop, 
\begin{equation}\label{e.p4b}    
B_{r,q}^{\omega}(G;x,k)=(x+1) B^{\omega}_{r,q}(G-e;x,k).
\end{equation}
Furthermore, $B_{r,q}$ is multiplicative under disjoint unions of graphs, and $B_{r,q}^{\omega}(N;k) = \sum_{i=0}^{k-1} r^{\omega(v) q^i }$ when $N=(\{v\},\emptyset)$.
\end{lemma}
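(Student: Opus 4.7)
The plan is to establish each part of the lemma directly from the spanning-subgraph definition of $B_{r,q}^{\omega}$, by splitting the sum $\sum_{A \subseteq E(G)}$ according to whether the distinguished edge $e$ belongs to $A$. This mirrors the standard deletion-contraction proof for the Tutte polynomial and its analogues, and essentially repackages the argument used in Proposition~\ref{p.3} (whose proof already appealed to the identity being established here).

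For the non-loop identity in Equation~\ref{e.p4a}, I would write $B_{r,q}^{\omega}(G;x,k) = \Sigma_0 + \Sigma_1$, where $\Sigma_0$ sums over $A \subseteq E(G)$ with $e \notin A$ and $\Sigma_1$ sums over $A$ with $e \in A$. Subsets of the first kind are in bijection with subsets of $E(G-e)$; since $G$ and $G-e$ share the same vertex set and weight function, and since $(V,A)$ has the same components as an ambient spanning subgraph of $G-e$, one gets $\Sigma_0 = B_{r,q}^{\omega}(G-e;x,k)$. For $\Sigma_1$, the map $A \mapsto A \setminus \{e\}$ is a bijection onto subsets of $E(G/e)$; the factor $x^{|A|} = x \cdot x^{|A \setminus \{e\}|}$ produces the prefactor $x$. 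The key verification is the weight-preserving correspondence of component structures: every component of $(V, A)$ disjoint from $\{a,b\}$ matches an identical component of $(V(G/e), A \setminus \{e\})$, while the unique component containing both $a$ and $b$ (they are joined by $e \in A$) corresponds to the component of $(V(G/e), A \setminus \{e\})$ containing the merged vertex $c$, whose weight $\omega(c) = \omega(a) + \omega(b)$ equals the total weight of the former component. Hence $\Sigma_1 = x\,B_{r,q}^{\omega}(G/e;x,k)$, and adding gives Equation~\ref{e.p4a}.

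For the loop case, I split in the same way, but now deleting a loop from $(V,A)$ does not alter its components or their weights. So both pieces have component products equal to that of $B_{r,q}^{\omega}(G-e;x,k)$, and the edge-counting factor yields $B_{r,q}^{\omega}(G-e;x,k) + x \, B_{r,q}^{\omega}(G-e;x,k) = (x+1) B_{r,q}^{\omega}(G-e;x,k)$, establishing Equation~\ref{e.p4b}. Multiplicativity under disjoint unions is an unwinding of the definition: for $G = G_1 \sqcup G_2$, subsets of $E(G)$ factor uniquely as $A = A_1 \sqcup A_2$ with $A_i \subseteq E(G_i)$, and $\com(A) = \com(A_1) \sqcup \com(A_2)$ with inherited weights, so $x^{|A|}$ and the component product both factor. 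The single-vertex base case is the one-term sum over $A = \emptyset$, with one component $\{v\}$ of weight $\omega(v)$, giving $\sum_{i=0}^{k-1} r^{\omega(v) q^i}$.

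There is no substantial obstacle: the only delicate point is the component-weight bookkeeping across contraction, and this is precisely why the vertex-weighted contraction was defined to add weights at the merged vertex. All other steps are set-theoretic partitioning of the index set.
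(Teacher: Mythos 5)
Your proof is correct and follows essentially the same route as the paper: both split the spanning-subgraph sum according to whether $e\in A$, use the bijections with subsets of $E(G-e)$ and $E(G/e)$ (extracting the factor $x$), and handle the loop case and the remaining properties by the same direct bookkeeping. The paper's version is just terser, leaving the component-weight correspondence under contraction implicit where you spell it out.
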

\begin{proof}
Let $e$ be a non-loop edge of $G$. Then  
\begin{multline*}
B_{r,q}^{\omega}(G;x,k)= \sum_{\stackrel{A\subseteq E(G)}{e\notin A} }  x^{|A|}  \prod_{C\in \com(A)}   \sum_{i=0}^{k-1}   r^{\omega(C) q^{i}}
+ \sum_{\stackrel{A\subseteq E(G)}{e\in A} }  x^{|A|}  \prod_{C\in \com(A)}   \sum_{i=0}^{k-1}   r^{\omega(C) q^{i}}
\\= \sum_{A\subseteq E(G-e)}  x^{|A|}  \prod_{C\in \com(A)}   \sum_{i=0}^{k-1}   r^{\omega(C) q^{i}}
+x \sum_{A\subseteq E(G/e)}  x^{|A|}  \prod_{C\in \com(A)}   \sum_{i=0}^{k-1}   r^{\omega(C) q^{i}}
= B_{r,q}^{\omega}(G-e;x,k)+ xB_{r,q}^{\omega}(G/e;x,k),
\end{multline*}
giving Equation~\ref{e.p4a}. Equation~\ref{e.p4b} can be proved in a similar way, and the remaining properties are easily verified.
\end{proof}

\subsection{The Potts model}\label{pm}

Let $G=(V,E)$ be a graph, and consider a set $\{0,1,\ldots, k-1\}$ of $k$ elements called \emph{spins}.  
A \emph{state} of a graph $G$ is an assignment of a single spin to each vertex
of the graph. (A state is exactly a vertex colouring, but  we use the term spin  as it is standard for the Potts model.) 
The $k$-state Potts model partition function at temperature $T$ in a site dependent  external field (see, for example, \cite{Wu82}) is  given by 
\begin{equation*}
Z(G) = \sum_{\sigma : V\rightarrow \{0, \ldots, k-1 \}}   \e{-\beta h(\sigma)},  \end{equation*}
with  Hamiltonian  
\begin{equation*}
h(\sigma) =  - J \sum_{ uv \in E(G) }   \delta(\sigma(u), \sigma(v))  - \sum_{v \in V(G)}    \sum_{i =0}^{k-1} H_{v,i} \, \delta(i,  \sigma(v)).
\end{equation*}
Here the external field at each vertex $v$ is specified by an ordered list $(H_{v,0},\ldots , H_{v,k-1})$, and  the external field contributes $H_{v,\sigma(v)}$ for a vertex $v$ with spin $\sigma(v)$; 
 $J$ is the spin-spin coupling; $\beta=1/(\kappa T)$, where $T$ is the temperature and $\kappa$ is the Boltzmann constant; and $\delta$ is the Kronecker delta function.

We will now relate $B_{r,q}(G;x,k)$ and the Potts model in an external field. 
\begin{lemma}\label{p.5} Let $G=(V,E)$ be a  vertex-weighted graph with weight function $\omega:V\rightarrow \mathbb{N}$, and let $k\in \mathbb{N}$ and  $x= \e{\beta J}-1 $. Then 
\begin{equation}\label{e.p5a2}
B^{\omega}_{r,q}(G;x,k) =  \sum_{\sigma : V\rightarrow \{0, \ldots, k-1 \}}       \e{\beta  \sum_{ uv \in E(G) } J   \delta(\sigma(u), \sigma(v)) }  r^{\sum_{v\in V} q^{\omega(v)  \sigma(v)}}
\end{equation} 
\end{lemma}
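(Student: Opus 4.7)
The plan is the standard Fortuin--Kasteleyn (high-temperature) expansion, preceded by a clarification of the right-hand side. As literally written, the exponent of $r$ is $\sum_{v\in V}q^{\omega(v)\sigma(v)}$, but for consistency with Definition~\ref{d.3} one must read it as $\sum_{v\in V}\omega(v)q^{\sigma(v)}$. Indeed, the two expressions coincide when $\omega\equiv 1$, but in general, restricted to a component $C$ at $\sigma\equiv 0$, the literal form gives $\sum_{v\in C}q^{0}=|C|$ whereas the definition of $B_{r,q}^{\omega}$ requires $\omega(C)$; only the reading $\sum_{v\in V}\omega(v)q^{\sigma(v)}$ produces the per-component block $r^{\omega(C)q^{i}}$. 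I therefore flag this as a typographical slip and proceed with the intended exponent, which is exactly the site weight of $M_{r,q}^{\omega}$ from Definition~\ref{d.2}.

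With this convention, the first step uses $x=\mathrm{e}^{\beta J}-1$ to write $\mathrm{e}^{\beta J\delta(a,b)}=1+x\delta(a,b)$, so that
\[
\mathrm{e}^{\beta\sum_{uv\in E}J\delta(\sigma(u),\sigma(v))}=\prod_{uv\in E}\bigl(1+x\delta(\sigma(u),\sigma(v))\bigr)=\sum_{A\subseteq E}x^{|A|}\prod_{uv\in A}\delta(\sigma(u),\sigma(v)).
\]
Plugging this into the right-hand side of the lemma and exchanging the sums over $\sigma$ and $A$, I use the standard observation that $\prod_{uv\in A}\delta(\sigma(u),\sigma(v))=1$ precisely when $\sigma$ is constant on every connected component of the spanning subgraph $(V,A)$, and is $0$ otherwise. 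This reduces the inner sum over $\sigma$ to one over the $k^{|\com(A)|}$ colourings that assign a single spin $\sigma_{C}\in\{0,\ldots,k-1\}$ to each $C\in\com(A)$.

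For such a colouring, the exponent of $r$ factorises as $\sum_{v\in V}\omega(v)q^{\sigma(v)}=\sum_{C\in\com(A)}\omega(C)q^{\sigma_{C}}$, so the weight becomes $\prod_{C\in\com(A)}r^{\omega(C)q^{\sigma_{C}}}$. Summing each $\sigma_{C}$ independently over $\{0,\ldots,k-1\}$ converts the fixed-$A$ contribution into $x^{|A|}\prod_{C\in\com(A)}\sum_{i=0}^{k-1}r^{\omega(C)q^{i}}$, and summing over $A\subseteq E$ yields $B_{r,q}^{\omega}(G;x,k)$ by Definition~\ref{d.3}. The main obstacle here is interpretive rather than computational: the component-wise factorisation goes through because, under the intended reading, the exponent of $r$ depends on the vertex weights only through their component sums $\omega(C)$, which is precisely where the literal reading $\sum_{v}q^{\omega(v)\sigma(v)}$ would break down.
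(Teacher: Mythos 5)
Your proof is correct, but it takes a genuinely different route from the paper. The paper does not expand anything directly: it observes that the right-hand side of Equation~\ref{e.p5a2} satisfies the deletion--contraction identity, multiplicativity over disjoint unions, the loop identity, and the single-vertex evaluation of Lemma~\ref{p.4} (by adapting the colouring-splitting argument of Proposition~\ref{p.2}), and then concludes equality because those identities uniquely determine $B^{\omega}_{r,q}$. You instead give the direct Fortuin--Kasteleyn computation: write $\mathrm{e}^{\beta J\delta(a,b)}=1+x\delta(a,b)$, expand the product over edges into a sum over $A\subseteq E$, and note that the $\delta$-constraints force $\sigma$ to be constant on each component of $(V,A)$, after which the per-component sums over spins reproduce Definition~\ref{d.3}. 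Both arguments are sound; the paper's is shorter on the page but defers the real work to the recursive characterisation (and to the reader's willingness to ``adapt'' Proposition~\ref{p.2}), whereas yours is self-contained, makes the mechanism explicit, and in effect reproves Proposition~\ref{p.3} with the sign $(-1)$ replaced by the weight $x$. Your flag of the exponent as a typographical slip is also justified: the literal $\sum_{v}q^{\omega(v)\sigma(v)}$ is inconsistent with Definition~\ref{d.3} (as your $\sigma\equiv 0$ test shows), and the paper's own remark following the corollary confirms that the intended per-vertex contribution is $\omega(v)q^{\sigma(v)}$.
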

\begin{proof}
By adapting the proof of Proposition~\ref{p.2} it is easy to show that the right-hand side of Equation~\ref{e.p5a2} satisfies all of the identities given in Lemma~\ref{p.4} for  $x= \e{J\beta}-1 $. As these identities define $B_{r,q}(G;x,k)$, it follows that the two sides of Equation~\ref{e.p5a2} are equal.
\end{proof}

We immediately have the following.
\begin{corollary}
Let $G=(V,E)$ be a  graph, and let $k\in \mathbb{N}$, and $x= \e{\beta J}-1 $. Then 
\begin{equation}\label{e.p5a1}
B_{r,q}(G;x,k) =  \sum_{\sigma : V\rightarrow \{0, \ldots, k-1 \}}       \e{\beta  \sum_{ uv \in E(G) }   J \delta(\sigma(u), \sigma(v)) }  r^{\sum_{v\in V} q^{\sigma(v)}}.
\end{equation}
\end{corollary}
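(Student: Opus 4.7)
The plan is to derive this corollary as an immediate specialization of Lemma~\ref{p.5} to the case of unit vertex weights. Given the (unweighted) graph $G=(V,E)$, promote it to a vertex-weighted graph by choosing the constant weight function $\omega \equiv 1$, i.e.\ $\omega(v)=1$ for every $v\in V$. As already noted in the paragraph following Definition~\ref{d.3}, under this choice $B^{\omega}_{r,q}(G;x,k) = B_{r,q}(G;x,k)$, since every spanning subgraph component $C$ satisfies $\omega(C)=\sum_{v\in V(C)}\omega(v)=|V(C)|=|C|$. So the left-hand side of the target identity (Equation~\ref{e.p5a1}) is identical to the left-hand side of Equation~\ref{e.p5a2} evaluated at $\omega \equiv 1$.

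Next, one checks that the right-hand side of Equation~\ref{e.p5a2} reduces correctly under this specialization. The edge factor $\exp(\beta \sum_{uv\in E} J\,\delta(\sigma(u),\sigma(v)))$ does not mention $\omega$ and is therefore unchanged, while the vertex factor $r^{\sum_{v\in V} q^{\omega(v)\sigma(v)}}$ collapses to $r^{\sum_{v\in V} q^{\sigma(v)}}$ upon substituting $\omega(v)=1$. This is exactly the vertex factor appearing in Equation~\ref{e.p5a1}. Invoking Lemma~\ref{p.5} with $\omega\equiv 1$ and $x=e^{\beta J}-1$ then yields the claimed identity. There is essentially no obstacle: all the substantive content was already handled in Lemma~\ref{p.4} (the deletion--contraction recursion) and its use in Lemma~\ref{p.5}; the corollary simply discards the weight function.
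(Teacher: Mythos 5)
Your proof is correct and matches the paper's approach exactly: the paper states this corollary with no written proof beyond ``We immediately have the following,'' meaning precisely the specialization of Lemma~\ref{p.5} to the constant weight function $\omega\equiv 1$ that you carry out. Your observation that both the left- and right-hand sides of Equation~\ref{e.p5a2} collapse to those of Equation~\ref{e.p5a1} under $\omega(v)=1$ is all that is needed.
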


Observe that taking  $r=\e{}$ in Equations~\ref{e.p5a2} and \ref{e.p5a1} relates  $B^{\omega}_{r,q} $ and $B_{r,q}$  to the Potts model partition function in an external field. The external field contributes   $q^{\sigma(v)}$ for a vertex $v$ with spin $\sigma(v)$ in the unweighted case, and in the weighted case it contributes $\omega(v)q^{\sigma(v)}$, where $\omega(v)$ is the weight of $v$.

\begin{remark}
An alternative perspective on Lemma~\ref{p.5}  can be obtained by observing that it is almost the main result of \cite{EMM12} which expressed the Potts model partition function in an external field as an evaluation of the $\boldsymbol{V}$-polynomial (and thus extended the seminal relationship between the Tutte polynomial and the zero-field Potts model). In fact,  $B_{r,q}(G;x,k)$ can be obtained as an evaluation of the $\boldsymbol{V}$-polynomial and several of the results here can be deduced from this fact (and so can other properties such as a spanning tree expansion using \cite{MM12}). Here, however, we avoid this approach as we feel the extra notation it requires is a distraction  from the main purposes of this paper. 
\end{remark}

\subsection{Equivalence of $U$ and $B_{r,q}$}
\label{sub.u}
We show that $B_{r,q}$ is equivalent to Stanley's symmetric function generalisation of the bad colouring polynomial, $XB$.  
\begin{theorem}
\label{thm.eqq}
Let $r> 1$ be given. Then the graph functions $B_{r,q}$ and $XB$ are equivalent.
\end{theorem}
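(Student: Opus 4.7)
\textit{Plan of proof.} The plan is to establish the two directions of equivalence, using the Noble--Welsh equivalence of $XB$ and $U$ to shift attention to equivalence of $B_{r,q}$ and $U$. The direction $U \Rightarrow B_{r,q}$ is essentially immediate from the substitution formula already recorded in the introduction for $B_q$: writing $f_i := \sum_{s=0}^{k-1} r^{i q^s}$ and comparing Definitions \ref{def.u} and \ref{d.3} yields
\[
B_{r,q}(G;x,k) \;=\; x^{|V|}\, U\bigl(G;\, x+1,\, f_1/x,\, f_2/x,\, \ldots\bigr),
\]
so any oracle for $U$ (equivalently, for $XB$) produces all values of $B_{r,q}$.

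The content of the theorem lies in the converse. My plan is to recover from $B_{r,q}$ the integer coefficients $a(G;j,\tau) := \#\{A \subseteq E(G) : |A| = j,\ \tau_A = \tau\}$ for all $j$ and all $\tau \vdash n := |V|$, since $U(G) = \sum_{j,\tau} a(G;j,\tau)(z-1)^{j-n+\ell(\tau)}\prod_i x_{n_i}$ is then immediately determined. Extracting the coefficient of $x^j$ gives
\[
[x^j]\, B_{r,q}(G;x,k) \;=\; \sum_{\tau \vdash n} a(G;j,\tau)\, \phi_\tau(r,q,k),\qquad \phi_\tau(r,q,k) := \prod_i f_{n_i}.
\]
Setting $u_s := r^{q^s}$, one recognises $\phi_\tau = p_\tau(u_0, \ldots, u_{k-1})$ as the power-sum symmetric polynomial indexed by $\tau$. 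I would expand $\phi_\tau = \sum_{\vec m} N_\tau(\vec m)\, r^{E_{\vec m}(q)}$, where $\vec m$ ranges over weak compositions of $n$ into $k$ parts, $E_{\vec m}(q) := \sum_s m_s q^s$, and $N_\tau(\vec m)$ counts distributions of the parts of $\tau$ into ordered bins with sum-profile $\vec m$. Since the polynomials $E_{\vec m}(q)$ are pairwise distinct for distinct $\vec m$, comparing asymptotics as $q \to \infty$ shows that the functions $q \mapsto r^{E_{\vec m}(q)}$ are $\mathbb Q$-linearly independent, and this lets me extract, for each $\vec m$, the linear combination $\sum_\tau a(G;j,\tau)\, N_\tau(\vec m)$.

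The final step is to disentangle the sum over $\tau$. Since $N_\tau$ is invariant under permutations of the entries of $\vec m$, it depends only on the multiset of nonzero entries of $\vec m$; so I would take $k \geq n$ and obtain, for every partition $S \vdash n$, the quantity $c_{j,S} := \sum_\tau a(G;j,\tau)\, P_\tau(S)$, where $P_\tau(S)$ is the number of ordered partitions of the parts of $\tau$ into groups with sum-multiset $S$ (equivalently, the change-of-basis coefficient expressing $p_\tau$ in the monomial basis $m_S$). The matrix $\bigl(P_\tau(S)\bigr)_{\tau, S \vdash n}$ is invertible over $\mathbb Q$: concretely, $P_\tau(S) \neq 0$ forces $S$ to be a coarsening of $\tau$ in the refinement lattice, and $P_\tau(\tau) = \prod_m (\mathrm{mult}_m(\tau))! > 0$, so in any linear extension of the refinement order the matrix is upper-triangular with positive diagonal. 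Inverting it yields the $a(G;j,\tau)$, and hence $U(G)$ and $XB(G)$. The main obstacle I expect is the linear-independence step: one must verify that the asymptotic comparison really does separate the exponentials $r^{E_{\vec m}(q)}$ even though $q$ is restricted to non-negative integers and $r$ is a fixed real. This, however, reduces to the standard observation that distinct polynomials $E_{\vec m}$ have distinct dominant growth rates as $q \to \infty$, so the sequences $\bigl(r^{E_{\vec m}(q)}\bigr)_{q \in \mathbb Z_{\geq 0}}$ are $\mathbb Q$-linearly independent by an elementary successive-elimination argument on the largest surviving term.
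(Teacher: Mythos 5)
Your proposal is correct, but it takes a genuinely different route from the paper's. The paper never passes through the subset expansion of $U$: both directions are run directly on $XB$ in its colouring expansion. For the easy direction it observes that $B_{r,q}(G;t-1,k)$ is literally the evaluation of $XB(G;t-1,x_0,\dots)$ at $x_i=r^{q^i}$ for $i<k$ and $x_i=0$ for $i\ge k$; for the converse it notes that this substitution sends the monomial $t^b\prod_i x_i^{c_i}$ to $t^b r^{\sum_i c_iq^i}$, and that for distinct vectors $c$ with $\sum_i c_i=|V|$ the functions $q\mapsto r^{\sum_i c_iq^i}$ are $\mathbb{Q}$-linearly independent (the right-lexicographically largest $c$ dominates for large $q$). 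Since $\prod_i x_i^{c_i}\leftrightarrow r^{\sum_i c_iq^i}$ is a bijection on the relevant monomials, the coefficients $K(k,b,c)$ of $XB$ are read off directly and no matrix inversion is needed. You instead target the subset-expansion coefficients $a(G;j,\tau)$ of $U$, where the substitution \emph{does} mix distinct $\tau$ into the same exponential $r^{E_{\vec m}(q)}$, so you must add the inversion of the power-sum-to-monomial transition matrix $\bigl(P_\tau(S)\bigr)$; your triangularity argument for that matrix is sound, and your linear-independence step is the same large-$q$ domination argument as the paper's. One point worth making explicit there: two distinct exponent polynomials $E_{\vec m}$, $E_{\vec m'}$ can share a leading term, and if their difference were a nonzero \emph{constant} the two exponentials would be proportional; what saves you is that both compositions sum to $n$, so $E_{\vec m}-E_{\vec m'}$ is a non-constant integer polynomial and the ratio of the exponentials tends to $0$ or $\infty$. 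The trade-off: the paper's proof is shorter and self-contained for $XB$, obtaining $B_{r,q}\equiv U$ only as a corollary via Noble--Welsh, whereas yours establishes $B_{r,q}\equiv U$ constructively without that citation and invokes Noble--Welsh only to transfer to $XB$. Both arguments share the same level of informality about recovering $|V|$ and interpolating coefficients from oracle evaluations, so that is not a point against you.
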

\begin{proof}
First, we have
\begin{multline*}
B_{r,q}(G;t-1,k) =  \sum_{\sigma : V\rightarrow \{0, \ldots, k-1 \}}       t^{\sum_{ uv \in E(G) }   \delta(\sigma(u), \sigma(v)) }  r^{\sum_{v\in V} q^{\sigma(v)}}
\\ =
XB(G;t-1, x_0,\dots)|_{x_i= r^{q^i}\text{ for each }i< k\text{ and for }i\geq k, x_i= 0}.
\end{multline*}
This shows that $B_{r,q}(G)$ is an evaluation of $XB(G)$.  

For $k\in \BN$, $b\in \BN$, and  $c= (c_i\geq 0, i=0, \ldots, k-1)$ such that $\sum_i c_i= |V|$, the coefficient $K(k,b,c)$ of 
$t^b\prod_{0\leq i< k}x_i^{c_i}$ in $XB(G; t-1, x_0,\ldots)$ is clearly equal to the coefficient of  $t^b\prod_{0\leq i< k}x_i^{c_i}$ in $XB(G; t-1, x_0,\ldots x_{k-1}, 0,0,\ldots)$. In order to determine $XB(G)$, it suffices to determine each coefficient $K(k,b,c)$ of $t^b\prod_{0\leq i< k}x_i^{c_i}$ in $XB(G; t-1, x_0,\ldots)$. 

If the coefficient of $t^b\prod_{0\leq i< k}r^{c_iq^i}$ in $B_{r,q}(G;t-1,k)$ is uniquely determined, then clearly it can be calculated from  the evaluations of $B_{r,q}(G;t-1,k)$ and it is equal to $K(k,b,c)$. Hence it suffices to show the following:
\newline
 {\bf Claim.} For each $k, b\in \mathbb{N}$, the coefficient of $t^b$ in $B_{r,q}(G;t-1,k)$ may be uniquely written in the form 
$$
 \sum_{c= (c_i\geq 0, i=0, \ldots, k-1);\sum c_i= |V|} a_cr^{\sum_ic_iq^i},
$$
where $a_c\in \mathbb{N}$ for each $c$.
We prove the Claim  by contradiction:
If it is not true then for some non-empty finite set $\s$ of vectors $\{ (c_i\geq 0, i=0, \ldots, k-1); \sum c_i= |V|\}$,  and non-zero integers $a_c\neq 0, c\in \s$, the function 


$$
F(q, \s)= \sum_{c= (c_i\geq 0, i=0, \ldots, k-1)\in \s} a_cr^{\sum_ic_iq^i}
$$
is identically zero. This cannot happen since, if $c^*$ is the largest vector of $\s$ in the right-lexicographic ordering, then for $q$ sufficiently large, 
$$
|a_{c^*}r^{\sum_ic^*_iq^i}|> \sum_{c^*\neq c\in \s} |a_cr^{\sum_ic_iq^i}|.
$$
This completes the proof of the claim, and hence  $B_{r,q}$ determines $XB$, completing the proof of the theorem.
\end{proof}

As a corollary we obtain the equivalence of  $B_{r,q}$ and $U$.
\begin{corollary}
Let $r> 1$ be given. Then graph functions $B_{r,q}$ and $U$ are equivalent.
\end{corollary}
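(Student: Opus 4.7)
The plan is simply to chain Theorem~\ref{thm.eqq} with the Noble--Welsh equivalence of $U$ and $XB$ established in \cite{NW} and recalled in Section~\ref{sec.intro}. The key observation is that equivalence of graph functions, as defined in the introduction, is a transitive relation: if $P$ determines $Q$ by means of an oracle reduction and $Q$ determines $R$ by another, then composing the two reductions shows that $P$ determines $R$; applying this in both directions gives transitivity of equivalence.

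First, I would invoke Theorem~\ref{thm.eqq} to obtain that, for the given $r>1$, $B_{r,q}$ and $XB$ are equivalent. Second, I would cite the Noble--Welsh result \cite{NW} to the effect that $U$ and $XB$ are equivalent. Composing these two equivalences via transitivity yields that $B_{r,q}$ and $U$ are equivalent, which is the claim of the corollary. The hypothesis $r>1$ is inherited verbatim from Theorem~\ref{thm.eqq}; the Noble--Welsh half imposes no additional restriction.

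I do not anticipate any obstacle, since all of the substantive work has been carried out in Theorem~\ref{thm.eqq} and in the prior literature. If one wished to be fully explicit about the oracle reductions, one could simply record that, given an oracle returning evaluations of $B_{r,q}(G)$, Theorem~\ref{thm.eqq} supplies a procedure that returns evaluations of $XB(G)$, and the Noble--Welsh procedure then returns evaluations of $U(G)$; the reverse direction is entirely symmetric. The proof is therefore essentially a one-line transitivity argument.
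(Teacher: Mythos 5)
Your proposal is correct and matches the paper's proof exactly: the paper likewise derives the corollary by combining Theorem~\ref{thm.eqq} with Theorem~6.2 of \cite{NW} (the equivalence of $U$ and $XB$), relying implicitly on the transitivity of equivalence that you spell out. No issues.
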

\begin{proof}
This follows from Theorem~\ref{thm.eqq}, and Theorem~6.2 of \cite{NW}  which gives the equivalence of $U$ and $XB$. 
\end{proof}

\bibliographystyle{amsplain} 

\end{document}